\def\eqalign#1{\,\vcenter{\openup\jot\m@th
  \ialign{\strut\hfil$\displaystyle{##}$&$\displaystyle{{}##}$\hfil
      \crcr#1\crcr}}\,}
\def\eqalignno#1{\displ@y \tabskip\@centering
  \halign to\displaywidth{\hfil$\displaystyle{##}$\tabskip\z@skip
    &$\displaystyle{{}##}$\hfil\tabskip\@centering
    &\llap{$##$}\tabskip\z@skip\crcr
    #1\crcr}}
\def\leqalignno#1{\displ@y \tabskip\@centering
  \halign to\displaywidth{\hfil$\displaystyle{##}$\tabskip\z@skip
    &$\displaystyle{{}##}$\hfil\tabskip\@centering
    &\kern-\displaywidth\rlap{$##$}\tabskip\displaywidth\crcr
    #1\crcr}}
\newdimen\pixel \pixel=.00333333 true in
\def\bigpar{\bigbreak\@afterindentfalse\@afterheading\ignorespaces}
\def\medpar{\medbreak\@afterindentfalse\@afterheading\ignorespaces}
\def\smallpar{\smallbreak\@afterindentfalse\@afterheading\ignorespaces}
\newlength{\saveindent}
\newenvironment{proof}%
      {\bigpar{\bf Proof:}\ 
             \setlength{\saveindent}{\parindent} 
                       \ignorespaces}%
      {\stopproof\ignorespaces\bigbreak \setlength{\parindent}{\saveindent}}
\bigbreak \setlength{\parindent}{\saveindent}}
\bigbreak \setlength{\parindent}{\saveindent}}
\newenvironment{proofof}[1]%
      {\bigpar{\bf#1:}\ %
             \setlength{\saveindent}{\parindent} 
                       \ignorespaces}%
      {\stopproof\ignorespaces\bigbreak \setlength{\parindent}{\saveindent}}
\newenvironment{remark}%
      {\smallpar{\bf Remark:}\ 
                       \ignorespaces}%
      {\stopproof\ignorespaces\medbreak \setlength{\parindent}{\saveindent}}
\newenvironment{remark*}%
      {\smallpar{\bf Remark:}\ 
                       \ignorespaces}%
      {\ignorespaces\medbreak \setlength{\parindent}{\saveindent}}
\medbreak \setlength{\parindent}{\saveindent}}
\newenvironment{remarks*}%
      {\smallpar{\bf Remarks:}\ 
                       \ignorespaces}%
      {\ignorespaces\medbreak \setlength{\parindent}{\saveindent}}
\medbreak \setlength{\parindent}{\saveindent}}
\medbreak \setlength{\parindent}{\saveindent}}
\newtheorem{theorem}{Theorem}
\newtheorem{lemma}[theorem]{Lemma}
\newtheorem{proposition}[theorem]{Proposition}
\newtheorem{definition}[theorem]{Definition}
\newtheorem{example}{Example}
\def\begex{\begin{example}\parindent=0pt \rm}
\def\endex{\end{example}}
\def\square{\vbox{\hrule height.2pt\hbox{\vrule width.2pt height5pt \kern5pt
                                   \vrule width.2pt} \hrule height.2pt}}
\def\stopproof{\hfill \square \smallskip}
\def\half{{\textstyle{1\over2}}}
\def\quarter{{\textstyle{1\over4}}}
\def\eighth{{\textstyle{1\over8}}}
\def \xt {{\tilde X}}
\def \zt {{\tilde Z}}
\def \r {{\bf R}}
\def \r {{ \cal F}}
\def\r|{{\Bigr\vert}}
\def\l|{{\Bigl\vert}}
\def\phi {\Phi}
\def\e{\epsilon}
\def \zt {{\tilde Z}}
\def\varepsilon{\mathchar"122 }
\def \chi {{\mathbf 1}}
\def\law{{\cal L}}
\def\P {{ \bf P}}
\def\e {{ \bf {E}}}
\def \id {{\rm id}}
\def\Sq{{\cal S}_q}
\def\Sq-{{\cal S}_{q-1}}
\def\u{{\cal U   }}
\def \S {{\cal S}}
\def\f {{\cal F}}
\def \fh {{\widehat \f}}
\def\g {{\cal G}_{j+1}}
\def\given {{\,|\,}}
\def\ent{{ \rm \sc ENT}}
\newcommand{\be}{\begin{equation}}
\newcommand{\ee}{\end{equation}}
\newcommand{\lab}{\label}
\begin{document}
\title{Improved mixing time bounds for the \\ 
  Thorp shuffle }
\author{
{\sc Ben Morris}\thanks{Department of Mathematics,
University of California, Davis.
Email:
{\tt morris@math.ucdavis.edu}. 
Research partially supported 
NSF grant DMS-0707144.}
}  
\date{}
\maketitle
\begin{abstract}
\noindent 
E.~Thorp  
introduced the following card
shuffling model. 
Suppose the number of cards $n$ is even. 
Cut the deck into two equal piles. 
Drop the first card from the left pile or from the right pile
according 
to the outcome of a fair coin flip. Then drop from the other pile. 
Continue this way until both piles are empty. 
We show that if $n$ is  a power of $2$
then the mixing time of the Thorp shuffle is 
$O(\log^3 n)$. Previously, the best known bound 
was $O(\log^{4} n)$.
\end{abstract}
Key words: Markov chain, mixing time.
\setcounter{page}{1}

\section{Introduction} \lab{intro}
Card shuffling has a rich history in mathematics, dating back to 
work of Markov \cite{markov} and Poincare
\cite{poincare}. A basic problem is to determine 
the mixing time, i.e., the number of shuffles necessary to mix up the deck
(sec Section \ref{secaps} for a precise definition).
In \cite{imp}, the author found 
a general method 
that reduces bounding the mixing time of a card shuffle to
verifying a local
condition that involves only pairs of cards. 
This was used to give mixing time bounds for the Thorp shuffle 
and Durrett's $L$-reversal chain. In the present paper, 
we build on the techniques of \cite{imp}
and get an improved analysis of the Thorp shuffle. 

\section{Previous work}
Thorp \cite{th} introduced the following card shuffling model in 1973. 
Assume that the number of cards, $n$, is even.
Cut the deck into two equal piles. 
Drop the first card from the left pile or the
right
pile according to the outcome of a fair coin flip;  
then drop from the other pile. 
Continue this way, 
with independent coin flips deciding whether to drop {\sc left-right} or 
{\sc right-left} each time, until 
both piles are empty. 

Analyzing the Thorp shuffle 
is an old problem with theoretical roots. 
However, recently the Thorp shuffle has 
found applications in applied cryptography. The author,
Phil Rogaway and Till Stegers have used the Thorp shuffle as 
the basis for a practical algorithm for encoding small messages
such as social security numbers and credit card numbers
(see \cite{cthorp}). In order to analyze the algorithm
it is important to have good bounds on the mixing time. 

 The  Thorp shuffle, despite its simple description, 
has been hard
to analyze. Determining its mixing 
time 
has been called 
the ``longest-standing open card shuffling problem'' \cite{per}. 
In \cite{thorp} the author obtained the first 
poly log upper bound, proving a 
bound of $O(\log^{44} n)$, 
valid when $n$ is a power of $2$.  
Montenegro and Tetali \cite{mt-thorp} built on this
to get a bound of $O(\log^{29} n)$. 
In \cite{imp} the bound 
was improved to $O(\log^4 n)$, with no 
power-of-two assumption.
In the present paper we show that if the number of cards is 
a power of two, then the mixing time is 
$O(\log^3 n)$.

\section{Background} 
\label{bk}
\label{secaps}
 In this section we give some basic definitions and recall some notation
from \cite{imp}.  
  Let $p(x,y)$ 
be transition probabilities for a   Markov chain on a finite state space $V$
with a uniform stationary distribution.
For probability measures $\mu$ and $\nu$ on $V$,
define the total variation distance
$|| \mu - \nu || = \sum_{x \in V} |\mu(x) - \nu(x) |$, 
and 
define the mixing time
\be
\label{mixingtime}
T_{\rm mix} = \min \{n: || p^n(x, \, \cdot) - \u || \leq \quarter \mbox{ for all $x \in V$}\} \,,
\ee
where $\u$ denotes the uniform distribution.

 For a
probability distribution $\{p_i: i \in V\}$, define the 
(relative) entropy of $p$ by 
$\ent(p) = \sum_{i \in V} p_i \log (|V| p_i)$,
where we define $0 \log 0 = 0$.
The following well-known inequality links relative entropy to total 
variation distance. We have 
\begin{equation}
\label{totent}
|| p - \u || \leq \sqrt{ \half \ent(p)}.
\end{equation}
If $X$
is a random variable (or random permutation) 
taking finitely many values, 
define $\ent(X)$ as the relative entropy of the
distribution of $X$.
Note that if $\P(X = i) = p_i$  for $i \in V$ then
$\ent(X) = \e(\log (|V| p_X))$. 
We
shall think
of the distribution 
of a random permutation
in $\S_n$ as a sequence of probabilities of length $n!$, 
indexed by permutations in $\S_n$.
If $\f$ is a sigma-field, then we shall write 
$\ent(X \given \f)$ for the relative entropy of the conditional
 distribution of $X$ given $\f$. 
Note that $\ent(X \given \f)$ is a random variable.
If $\pi$ is a random permutation
in $S_n$, 
then for $1 \leq k \leq n$,  
define
$\f_k = \sigma( \pi^{-1}(k), \dots, \pi^{-1}(n))$, 
and 
define $\ent(\pi, k) = \ent( \pi^{-1}(k) \given \f_{k+1})$
(where we think of the 
conditional distribution 
of $\pi^{-1}(k)$ given $\f_{k+1}$ 
as being a sequence of length
$k$). 
The standard entropy chain rule (see, e.g., \cite{cover})
gives the following proposition.
\begin{proposition}
\label{decomp}
For any $i \leq n$ we have
\[
\ent(\pi) = \e \Bigl(\ent( \pi \given \f_{i})\Bigr) 
+ \sum_{k=i}^n \e(\ent(\pi, k)).
\]
\end{proposition}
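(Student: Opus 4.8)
The plan is to obtain the identity from the classical chain rule for Shannon entropy (see \cite{cover}) after translating between Shannon entropy and relative entropy. Write $H(\,\cdot\,)$ for Shannon entropy. The basic dictionary is: if a random variable $X$ has law supported on a set of size $N$, then $\ent(X)=\log N-H(X)$ (with $0\log 0=0$), and likewise $\ent(X\given\f)=\log N_{\f}-H(X\given\f)$ as an identity of $\f$-measurable random variables, where $N_{\f}$ is the size of the support of the conditional law of $X$ given $\f$. In our setting all these support sizes are nonrandom because $\pi$ is a permutation: given $\f_{k+1}=\sigma(\pi^{-1}(k+1),\dots,\pi^{-1}(n))$ the label $\pi^{-1}(k)$ takes exactly $k$ values (the unused labels), so $N_{\f_{k+1}}=k$ for $\pi^{-1}(k)$; and given $\f_i$ the permutation $\pi$ is determined by the ordering of the $i-1$ unused labels into positions $1,\dots,i-1$, so its conditional law is supported on exactly $(i-1)!$ permutations. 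With these normalizations $\ent(\pi,k)=\log k-H(\pi^{-1}(k)\given\f_{k+1})$, matching the definition of $\ent(\pi,k)$ as the entropy of a length-$k$ sequence, and $\ent(\pi\given\f_i)=\log((i-1)!)-H(\pi\given\f_i)$.

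Next I would use the bijection $\pi\leftrightarrow(\pi^{-1}(1),\dots,\pi^{-1}(n))$ between $S_n$ and the orderings of $\{1,\dots,n\}$, together with the facts that $(\pi^{-1}(i),\dots,\pi^{-1}(n))$ generates $\f_i$ and that $\f_{n+1}$ is the trivial $\sigma$-field. The chain rule for Shannon entropy then gives
\be
H(\pi)=H\bigl(\pi^{-1}(i),\dots,\pi^{-1}(n)\bigr)+H(\pi\given\f_i)=H(\pi\given\f_i)+\sum_{k=i}^{n}H\bigl(\pi^{-1}(k)\given\f_{k+1}\bigr),
\ee
where the first step peels off the block $(\pi^{-1}(i),\dots,\pi^{-1}(n))$, which is a function of $\pi$, and the second peels off its coordinates one at a time from $\pi^{-1}(n)$ downward.

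Finally I would substitute the dictionary entries $\ent(\pi)=\log(n!)-H(\pi)$, $H(\pi\given\f_i)=\log((i-1)!)-\ent(\pi\given\f_i)$, and $H(\pi^{-1}(k)\given\f_{k+1})=\log k-\ent(\pi,k)$ into the displayed identity, take expectations, and rearrange, obtaining
\be
\ent(\pi)=\e\bigl(\ent(\pi\given\f_i)\bigr)+\sum_{k=i}^{n}\e(\ent(\pi,k))+\Bigl(\log(n!)-\log((i-1)!)-\sum_{k=i}^{n}\log k\Bigr).
\ee
The final bracket vanishes since $n!=(i-1)!\,\prod_{k=i}^{n}k$, which yields the claim. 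I do not expect a genuine obstacle here; the content is the standard chain rule, and the only point needing care is the bookkeeping of reference measures, i.e.\ checking that the conditional laws are supported on sets of the stated sizes so that the $\log$-normalizations telescope to $\log(n!)$. (An equivalent route that avoids Shannon entropy entirely is to apply the product-measure chain rule for relative entropy conditionally on $\f_{i+1}$, using that the uniform law on the $i!$ orderings of the unused labels factors as the product of the uniform law on the $i$ choices for $\pi^{-1}(i)$ and the uniform law on the $(i-1)!$ orderings of the rest; this gives the one-step identity $\e(\ent(\pi\given\f_{i+1}))=\e(\ent(\pi,i))+\e(\ent(\pi\given\f_i))$, from which the proposition follows by downward induction on $i$ starting from the trivial $\sigma$-field $\f_{n+1}$.)
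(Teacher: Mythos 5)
Your proof is correct and matches the paper's intent exactly: the paper offers no written proof, simply attributing the identity to the standard entropy chain rule of \cite{cover}, and your argument is that chain rule carried out with the normalization bookkeeping (converting between Shannon entropy and relative entropy with respect to the uniform reference measures of sizes $k$ and $(i-1)!$) made explicit. The telescoping of the $\log$-normalizations via $n!=(i-1)!\prod_{k=i}^{n}k$ is precisely the point needing care, and you handle it correctly.
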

To compute the relative entropy in first term on the right hand side,
we think of the distribution of $\pi$ given $\f_i$ as a sequence of
probabilities of length $(i-1)!$. 
\begin{remark}
Substituting $i = 1$ into the formula 
gives $\ent(\pi) = \sum_{k=1}^n \e(\ent(\pi,k))$.
\end{remark} 
If we think of $\pi$ as representing the order of a deck of cards,
with $\pi(i) = \mbox{location of card $i$}$, then this allows us to 
think of $\e(\ent(\pi, k))$ as the portion of the overall entropy
$\ent(\pi)$
that is attributable to the location $k$. 
We will also need the following proposition.
\begin{proposition}
\label{comps}
Let $\nu_1$ and $\nu_2$ be random permutations
on $\{0,\dots, n-1\}$. Suppose that there
is a set $W \subset \{0,1,\dots, n-1\}$ such that
$\nu_1^{-1}(x) = \nu_2^{-1}(x)$ for all $x \in W$. Let 
$\f = \sigma( \nu_1^{-1}(x): x \in W)$. Then 
\[
\ent(\nu_1) - \ent(\nu_2) = 
\e(\ent(\nu_1 \given \f) - \ent(\nu_2 \given \f)).
\]
\end{proposition}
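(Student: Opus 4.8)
The plan is to reduce the statement to a single application of the entropy chain rule, splitting off from each of $\nu_1,\nu_2$ the part that is measured by $\f$.

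First I would fix notation. For $i\in\{1,2\}$ let $Y_i$ denote the tuple $\bigl(\nu_i^{-1}(x)\bigr)_{x\in W}$; this is an injection from $W$ into $\{0,\dots,n-1\}$, so it takes values in a fixed finite set of cardinality $N:=n!/(n-|W|)!$. By hypothesis $\nu_1^{-1}(x)=\nu_2^{-1}(x)$ for every $x\in W$, so $Y_1=Y_2$ as random variables, and hence $\f=\sigma(Y_1)=\sigma(Y_2)=\f'$, where $\f':=\sigma(\nu_2^{-1}(x):x\in W)$. Moreover, conditioned on $\f$ the locations of all cards in $W$ are determined, so the cards in $\{0,\dots,n-1\}\setminus W$ occupy the remaining $n-|W|$ positions in some order; thus the conditional law of $\nu_i$ given $\f$ is supported on a set of $(n-|W|)!$ permutations, which is exactly the normalization under which $\ent(\nu_i\given\f)$ is defined (compare the discussion following Proposition \ref{decomp}).

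Next I would establish the chain-rule identity
\[
\ent(\nu_i)=\ent(Y_i)+\e\bigl(\ent(\nu_i\given\f)\bigr),\qquad i=1,2,
\]
where $\ent(Y_i)$ denotes the relative entropy of $Y_i$ with respect to the uniform distribution on the $N$-element set of injections $W\to\{0,\dots,n-1\}$. This is the standard computation: writing $p_\sigma=\P(\nu_i=\sigma)$ and letting $y=y(\sigma)$ be the restriction of $\sigma^{-1}$ to $W$, we factor $p_\sigma=\P(Y_i=y)\,\P(\nu_i=\sigma\given Y_i=y)$, use $\log(n!\,p_\sigma)=\log\bigl(N\,\P(Y_i=y)\bigr)+\log\bigl((n-|W|)!\,\P(\nu_i=\sigma\given Y_i=y)\bigr)$ (valid since $n!=N\cdot(n-|W|)!$), and sum against the weights $p_\sigma$, grouping the sum according to the value of $y$.

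Finally, since $Y_1=Y_2$ we have $\ent(Y_1)=\ent(Y_2)$, so subtracting the two instances of the chain rule and using linearity of expectation gives $\ent(\nu_1)-\ent(\nu_2)=\e\bigl(\ent(\nu_1\given\f)\bigr)-\e\bigl(\ent(\nu_2\given\f)\bigr)=\e\bigl(\ent(\nu_1\given\f)-\ent(\nu_2\given\f)\bigr)$, as claimed. There is no serious obstacle here; the only points that need care are the bookkeeping of the normalizing cardinalities at each stage and the verification that the conditional law given $\f$ lives on $(n-|W|)!$ permutations, so that $\ent(\nu_i\given\f)$ is used consistently with its definition. (Alternatively, one could relabel the card values by a fixed permutation so that $W$ becomes a suffix $\{n-|W|,\dots,n-1\}$ — this leaves all the entropies unchanged — and then quote Proposition \ref{decomp} directly; but the direct computation above avoids the index juggling.)
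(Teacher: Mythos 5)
Your argument is correct and is essentially identical to the paper's own proof: both apply the entropy chain rule to write $\ent(\nu_i)=\ent(\nu_i^{-1}(x):x\in W)+\e(\ent(\nu_i\given\f))$ and then observe that the first term is the same for $i=1,2$. The extra bookkeeping you supply about the normalizing cardinalities is a fine elaboration of the same step, not a different route.
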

\begin{proof} 
By the chain rule for entropy, for $i=1,2$ we can write 
\[
\ent(\nu_i) 
=
\ent(\nu_i^{-1}(x): x \in W) + 
 \e(\ent(\nu_i \given \f) 
\]
Since the first term doesn't depend on $i$ the proposition follows. 
\end{proof}
\begin{definition}
For $p, q \geq 0$, define $d(p, q) = \half p\log p + \half q \log q - 
{p + q \over 2} \log\Bigl({p + q \over 2} \Bigr)$.
\end{definition}
We will need the following proposition, which is easily 
verified using calculus.
\begin{proposition}(\cite{imp})
\label{conv}
Fix $p \geq 0$. 
The function $d(p,\,\cdot)$ is convex.
\end{proposition}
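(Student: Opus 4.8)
The plan is to fix $p \geq 0$, regard $g(q) := d(p,q)$ as a function of the single real variable $q \in [0,\infty)$, show that $g'' \geq 0$ on the open ray $(0,\infty)$, and then upgrade this to convexity on all of $[0,\infty)$ by appealing to continuity at the endpoint $q = 0$.

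First I would dispose of the degenerate case $p = 0$. Here, using the convention $0 \log 0 = 0$, we have $g(q) = \half q \log q - {q \over 2}\log{q \over 2} = (\tfrac{1}{2}\log 2)\, q$, which is linear and hence convex. For $p > 0$ I would work on the open ray $q > 0$, where $g$ is smooth. Differentiating, the constants produced by differentiating $\half q \log q$ and ${p+q \over 2}\log{p+q \over 2}$ cancel, leaving
\[
g'(q) = \half \log q - \half \log\Bigl({p + q \over 2}\Bigr) = \half \log\Bigl({2q \over p+q}\Bigr).
\]
Differentiating once more,
\[
g''(q) = \half\Bigl({1 \over q} - {1 \over p+q}\Bigr) = {p \over 2\,q\,(p+q)} \geq 0,
\]
so $g$ is convex on $(0,\infty)$.

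Finally I would observe that $g$ is continuous on the closed ray $[0,\infty)$: the term $\half q \log q \to 0$ as $q \downarrow 0$ under the convention $0 \log 0 = 0$, and the remaining terms are continuous there (note $p + q > 0$ whenever $q > 0$, and at $q=0$ the term ${p+q \over 2}\log{p+q \over 2}$ is just a constant, or $0$ if $p=0$). A function that is continuous on $[0,\infty)$ and convex on its interior is convex on the whole interval, so $d(p,\cdot)$ is convex, as claimed. There is no real obstacle here beyond the one-line second-derivative computation; the only bookkeeping point worth flagging is the behavior at the boundary $q = 0$ and in the degenerate case $p = 0$, both handled above.
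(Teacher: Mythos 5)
Your proof is correct and is exactly the calculus verification the paper alludes to (it gives no proof, stating only that the proposition "is easily verified using calculus" and citing \cite{imp}). The second-derivative computation $g''(q)=p/(2q(p+q))\geq 0$ is the whole content, and your handling of the boundary cases $p=0$ and $q=0$ is careful but routine.
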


Observe that $d(p,q) \geq 0$, with equality iff $p=q$ by the 
strict convexity
of the function $x \to x \log x$. 
If $p = \{p_i: i \in V\}$ and $q = \{q_i: i \in V\}$ 
are both probability distributions on $V$, then we can define the 
``distance'' $d(p,q)$ between $p$ and $q$, by $d(p,q) = \sum_{i \in V}
d(p_i, q_i)$. 
(We use the term {\it distance} loosely and don't claim that $d(\cdot,\,
\cdot)$ satisfies the triangle inequality.)
Note that $d(p,q)$ is the difference between the average
of the entropies of $p$ and $q$ and the entropy of the average
(i.e. an even mixture) of $p$ and $q$. 

We will use the following projection lemma.
\begin{lemma}(\cite{imp})
\label{projection}
Let $X$ and $Y$ be random variables with distributions $p$ and $q$,
respectively. Fix a function $g$ and let $P$ and $Q$ be the distributions of $g(X)$ and
$g(Y)$, respectively. Then $d(p, q) \geq d(P,Q)$. 
\end{lemma}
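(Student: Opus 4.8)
The plan is to reduce the inequality to a purely one-dimensional superadditivity property of the function $d$, and then to prove that property via the log-sum inequality (an alternative route uses Proposition~\ref{conv}).

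Let $V$ be the (finite) common range of $X$ and $Y$, let $W$ be the range of $g$, and for $w\in W$ write $g^{-1}(w)=\{v\in V: g(v)=w\}$. The fibers $\{g^{-1}(w)\}_{w\in W}$ partition $V$, and by definition $P_w=\sum_{v\in g^{-1}(w)}p_v$ and $Q_w=\sum_{v\in g^{-1}(w)}q_v$. Grouping the sum that defines $d(p,q)$ according to these fibers, I would reduce the lemma to the single-fiber bound
\[
\sum_{v\in g^{-1}(w)}d(p_v,q_v)\ \ge\ d\Bigl(\sum_{v\in g^{-1}(w)}p_v,\ \sum_{v\in g^{-1}(w)}q_v\Bigr)
\]
for each fixed $w$; summing this over $w\in W$ then gives $d(p,q)\ge d(P,Q)$. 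Thus it suffices to prove the elementary claim: for nonnegative reals $a_1,\dots,a_m$ and $b_1,\dots,b_m$, writing $A=\sum_i a_i$ and $B=\sum_i b_i$, one has $\sum_i d(a_i,b_i)\ge d(A,B)$.

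For the claim I would use the identity $d(p,q)=\half\,p\log\frac{2p}{p+q}+\half\,q\log\frac{2q}{p+q}$, which is immediate from the definition of $d$ and writes $d$ as one-half the sum of two relative-entropy-type terms. Applying the log-sum inequality (see, e.g., \cite{cover}) separately to $\sum_i a_i\log\frac{a_i}{(a_i+b_i)/2}$ and to $\sum_i b_i\log\frac{b_i}{(a_i+b_i)/2}$, and using $\sum_i\frac{a_i+b_i}{2}=\frac{A+B}{2}$, yields
\[
\sum_i d(a_i,b_i)\ \ge\ \half\,A\log\frac{2A}{A+B}+\half\,B\log\frac{2B}{A+B}\ =\ d(A,B),
\]
which is the claim. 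An alternative that avoids the log-sum inequality is to note that $d$ is positively homogeneous of degree one ($d(\lambda p,\lambda q)=\lambda\,d(p,q)$, since the $\log\lambda$ contributions cancel), to set $\lambda_i=a_i/A$ when $A>0$, to write $d(a_i,b_i)=\lambda_i\,d(A,b_i/\lambda_i)$ for the indices with $\lambda_i>0$, and to apply Jensen's inequality to the convex function $d(A,\,\cdot\,)$ provided by Proposition~\ref{conv}; the degenerate indices with $a_i=0$ are then handled using $d(0,b)=\half\,b\log 2$ together with the easy bound $\frac{\partial}{\partial B}d(A,B)\le\half\log 2$.

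The one place that needs care is the bookkeeping around zero entries (atoms with $p_v=0$ or $q_v=0$, and fibers of zero total mass) and the convention $0\log 0=0$. This is precisely where the log-sum formulation is most convenient, since its standard statement already incorporates the correct conventions; for that reason I would take it as the main line of argument and relegate the homogeneity-plus-Proposition~\ref{conv} computation to a remark.
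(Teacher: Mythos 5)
Your proof is correct. Note that the paper itself gives no proof of Lemma \ref{projection} --- it is quoted from \cite{imp} --- so there is no in-paper argument to compare against, and your write-up stands as a self-contained justification. The reduction to the single-fiber superadditivity $\sum_i d(a_i,b_i)\ge d\bigl(\sum_i a_i,\sum_i b_i\bigr)$ captures exactly the content of the lemma, and the identity $d(p,q)=\half\,p\log\frac{2p}{p+q}+\half\,q\log\frac{2q}{p+q}$ correctly exhibits $d$ as half the sum of two relative-entropy-type terms against the midpoint $(p+q)/2$, after which the log-sum inequality finishes the job; the zero-mass bookkeeping is indeed harmless there, since the denominator $(a_i+b_i)/2$ vanishes only when both numerators do. Your argument is in essence the data-processing inequality in disguise: $d(p,q)$ equals the mutual information between a fair coin $B$ and a variable $W$ drawn from $p$ or $q$ according to $B$ (this is the ``entropy of the average minus average of the entropies'' interpretation the paper records after Proposition \ref{conv}), and projecting by $g$ can only decrease that information. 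The alternative route you sketch --- degree-one homogeneity plus Jensen applied to the convex map $d(A,\cdot)$ of Proposition \ref{conv} --- is closer to the convexity toolkit this paper actually develops and uses elsewhere, but as you note it needs the extra patch at indices with $a_i=0$; taking the log-sum formulation as the main line is the right call.
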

Let $\u$ denote the uniform distribution on $V$. Note that if $\mu$ is 
an arbitrary distribution on $V$, then $\ent(\mu)$ and 
$d(\mu, \u)$ are both notions of a distance from $\mu$ to $\u$.
The following lemma relates the two.
\begin{lemma}(\cite{imp})
\label{dent}
For any distribution $\mu$ on $V$ we have
\[
d(\mu, \u) \geq {c \over \log |V|} \ent(\mu),
\]
for a universal constant $c > 0$.
\end{lemma}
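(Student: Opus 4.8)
Write $N = |V|$ and $u = 1/N$; the cases $N\leq 2$ are trivial, so assume $N\geq 3$. The idea is to reduce the lemma to a single-variable inequality. For $a\geq 0$ set
\[
g(a) \;=\; \half a\log a - {\textstyle{a+1\over 2}}\log{\textstyle{a+1\over 2}},
\qquad
\psi(a) \;=\; a\log a - a + 1,
\]
with the convention $0\log 0 = 0$. First I would record two elementary facts. (i) A one-line computation in the definition of $d$ shows the $\log u$ contributions cancel, so $d(au,u) = u\,g(a)$; hence $d(\mu,\u) = \sum_i d(\mu_i,u) = {1\over N}\sum_i g(N\mu_i)$. (ii) Since $\sum_i N\mu_i = N = \sum_i 1$, the two extra terms in $\psi$ cancel on summation: $\sum_i \psi(N\mu_i) = \sum_i N\mu_i\log(N\mu_i) = N\,\ent(\mu)$, i.e.\ $\ent(\mu) = {1\over N}\sum_i \psi(N\mu_i)$. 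Consequently, if the pointwise inequality
\[
g(a)\ \geq\ {c\over\log N}\,\psi(a)\qquad\mbox{for all } a\in[0,N]
\]
holds, then summing over $i$ (each $N\mu_i$ lies in $[0,N]$) gives $d(\mu,\u)\geq{c\over\log N}\ent(\mu)$, which is the lemma.

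\textbf{The single-variable inequality.} To prove it I would argue as follows. Both $g$ and $\psi$ are nonnegative on $[0,\infty)$ and vanish only at $a=1$, where each has a double zero with $g''(1)=\quarter$ and $\psi''(1)=1$; hence $g/\psi$ extends continuously across $a=1$ with value $\quarter$ there, and is continuous and strictly positive on all of $(0,\infty)$, with $g(0)/\psi(0)=\half\log 2>0$. A Taylor expansion at infinity gives $g(a)=\half a\log 2-\half\log a+O(1)$ and $\psi(a)=a\log a+O(a)$, so $(\log a)\,g(a)/\psi(a)\to\half\log 2$ as $a\to\infty$. Therefore the function $a\mapsto \log(e\vee a)\cdot g(a)/\psi(a)$ is continuous and strictly positive on $(0,\infty)$ with strictly positive limits at $0$ and at $\infty$, so it is bounded below by a universal constant $c>0$. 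For $a\in[0,N]$ (and $N\geq 3$) this gives $g(a)/\psi(a)\geq c/\log(e\vee a)\geq c/\log N$, as required.

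\textbf{Where the work is.} The only real content is this single-variable estimate, and it is there that the factor $1/\log N$ is forced: since $g(a)\sim\half a\log 2$ while $\psi(a)\sim a\log a$ as $a\to\infty$, the ratio $g(a)/\psi(a)$ genuinely decays like $1/\log a$, and the extreme value $a=N$ --- attained when $\mu$ is a point mass --- is exactly what produces the $\log N$ loss. The delicate points are pinning down these asymptotics and checking that $g/\psi$ does not drop to $0$ across the removable singularity at $a=1$ (where it equals $\quarter$); both are routine calculus. Everything else --- the cancellations in (i) and (ii) and the final summation --- is bookkeeping.
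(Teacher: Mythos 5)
The paper does not actually prove Lemma \ref{dent}; it is quoted from \cite{imp} with no argument given, so there is nothing internal to compare you against. Judged on its own, your proof is correct and self-contained. The two reductions check out: the $\log u$ terms do cancel so that $d(au,u)=u\,g(a)$, and adding the affine correction $-a+1$ to $a\log a$ changes nothing after summation because $\sum_i N\mu_i=\sum_i 1=N$, so the lemma really does reduce to $g(a)\geq \frac{c}{\log N}\psi(a)$ on $[0,N]$. Your calculus for that inequality is also right: $g''(1)=\quarter$, $\psi''(1)=1$ give the removable singularity value $\quarter$ at $a=1$; $g(0)/\psi(0)=\half\log 2$; and the expansion $g(a)=\half a\log 2-\half\log a+O(1)$ versus $\psi(a)=a\log a+O(a)$ gives the $1/\log a$ decay, so $\log(e\vee a)\,g(a)/\psi(a)$ extends to a positive continuous function on $[0,\infty]$ and has a positive infimum. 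The only loose end is the dismissal of $N\leq 2$ as ``trivial'': for $N=2$ the bound $\log(e\vee a)\leq\log N$ fails (since $\log 2<1$), so you should say explicitly that the same compactness argument on $[0,2]$ yields $g\geq c'\psi$ there and one absorbs the factor $\log 2$ into the universal constant. Your closing remark that the point mass shows the $\log N$ loss is genuinely necessary is correct and worth keeping.
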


A card shuffle can be described as 
a
random permutation chosen 
from 
a certain 
probability distribution.
If we start with 
the identity permutation and
each shuffle has the distribution of
$\pi$, then after $t$ steps the cards are distributed
like
$\pi_1 \cdots \pi_t$,
where the $\pi_i$ are i.i.d.~copies of $\pi$. 

\section{Thorp shuffle}
\label{secthorp}

Recall that the Thorp shuffle has the following description. 
Assume that the number of cards, $n$, is even.
Cut the deck into two equal piles. 
Drop the first card from the left pile or the
right
pile according to the outcome of a fair coin flip;  
then drop from the other pile. 
Continue this way, 
with independent coin flips deciding whether to drop {\sc left-right} or 
{\sc right-left} each time, until 
both piles are empty. 

We will actually work with the time reversal of the Thorp shuffle,
which has the same mixing time (since the Thorp shuffle is a 
random walk on a group; see \cite{rwg}). 
For convenience, we assume that $n = 2^d$ is a power of two. 
By writing the position of each card, from the bottom card ($0$)
to the top card ($2^{d} - 1$), in binary, we can view the
positions as elements 
of
the
$d$-dimensional unit hypercube $\{0,1\}^d$.
The reverse Thorp (RT) shuffle can then be constructed in the 
following way (see, e.g., \cite{cthorp}). 
Let $Z = \Bigl\{Z(l, t): l \in \{0,1\}^{d-1}, t \in \{0,1, \dots\}\Bigr\}$
be a collection of  i.i.d., Bernoulli(1/2) random variables. 
Note that $x \in \{0,1\}^d$ can be written as 
$x = (L(x), R(x))$, where $L(x)$ and $R(x)$ are the leftmost $d-1$ and 
rightmost bit, respectively, of $x$. The transition rule for the RT 
shuffle is as follows. At time $t$, suppose that the current state
$X_t = \pi$. Then the new state $X_{t+1} = \nu \circ \pi$, 
where $\nu$ is the permutation that sends
\[
(L, R) \to (R \oplus Z(L, t), L).
\]
We are now ready to state the technical result of this paper.
\begin{lemma}
\label{mainlemma}
Let $X_t$ be the reverse Thorp shuffle with $2^d$ cards. 
There is a universal constant $c$ such that 
if
$\mu$ is a random permutation which is independent of $\{X_t\}$
then
\[
\ent( X_d \circ \mu) \leq (1 - c/d) \ent(\mu).
\]
\end{lemma}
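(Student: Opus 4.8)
The plan is to exploit the entropy decomposition of Proposition~\ref{decomp}. Write $\pi = X_d \circ \mu$. Since $X_d$ is a product of $d$ i.i.d.\ RT steps and is independent of $\mu$, we can condition on the coin variables $Z$ and think of $X_d$ as a deterministic bijection $\phi$ on positions; but the point is that after $d$ steps every bit of the position has been ``touched'' in a way that lets pairs of cards be compared. Concretely, I would fix a position $k \in \{0,1\}^d$ and study $\ent(\pi, k) = \ent(\pi^{-1}(k) \mid \f_{k+1})$, the contribution to the overall entropy attributable to location $k$. The strategy is to show that for each $k$,
\[
\e\bigl(\ent(\pi, k)\bigr) \ \le\ \e\bigl(\ent(\mu, \sigma(k))\bigr) \ -\ (\text{a gain of order } \tfrac{1}{d}\,\e(\ent(\mu,\sigma(k)))),
\]
for an appropriate matching $\sigma$ of positions, and then sum over $k$ using the Remark after Proposition~\ref{decomp} that $\ent(\mu) = \sum_k \e(\ent(\mu,k))$. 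The per-coordinate gain comes from a single Thorp step mixing the rightmost bit of a position: when we drop a card into position $(R \oplus Z(L,t), L)$, the value $Z(L,t)$ is a genuinely fresh fair coin, so conditioned on $\f_{k+1}$ the location of card $\pi^{-1}(k)$ gets averaged against the location of whichever card lands in the ``sibling'' position differing in one bit. This averaging is exactly the operation measured by the function $d(\cdot,\cdot)$ and controlled by Lemma~\ref{dent}: the entropy drop is at least $\frac{c}{\log(\text{alphabet size})}$ times the current entropy, and since the relevant conditional distributions live on sets of size at most $2^d$, this is a gain of order $1/d$.

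The key steps, in order, are: (1) reduce to a pair/coordinate statement by the chain rule, conditioning on the larger positions $\f_{k+1}$ so that only the distribution of $\pi^{-1}(k)$ on a set of size $k \le 2^d$ matters; (2) track, across the $d$ RT steps, how the binary address of location $k$ is built up, and identify at which step the information-carrying coin $Z(L,t)$ is applied to the relevant pair of cards --- here one wants to argue that over $d$ steps, with $n=2^d$, each card's position bits are each refreshed, so no coordinate is ``starved'' of a fresh coin; (3) at that step, apply Proposition~\ref{conv} (convexity of $d(p,\cdot)$) and Lemma~\ref{projection} to push the pairwise mixing estimate down to the distribution of $\pi^{-1}(k)$, obtaining a one-step decrease of $d$-distance to uniform; (4) convert the $d$-distance decrease into an entropy decrease via Lemma~\ref{dent}, picking up the $1/d$ factor; (5) re-sum over $k$ via Proposition~\ref{decomp}. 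Proposition~\ref{comps} will be used to justify that when we condition on the positions in $W = \f_{k+1}$ and compare $\pi$ before and after the critical step, only the conditional distributions enter the entropy difference, so the bookkeeping over the remaining $(k-1)!$-long probability vector is clean.

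The main obstacle I expect is step~(2): controlling how the Thorp coins $Z(L,t)$ act on a \emph{specific} pair of cards as a function of the conditioning $\f_{k+1}$, and ensuring that over exactly $d$ steps every location gets the benefit of an independent fair coin that is not already ``used up'' by the conditioning. Because $\f_{k+1}$ reveals the locations of all cards with labels $> k$, the coin that would mix card $\pi^{-1}(k)$ against its sibling may be partially determined by that conditioning; the delicate point is to show that enough fresh randomness survives, uniformly over $k$, to guarantee the claimed contraction $(1-c/d)$ with a single universal $c$. Handling this likely requires a careful choice of which of the $d$ steps to ``charge'' to location $k$ (e.g.\ the step where the least-significant still-random bit of $k$'s address is set), together with the observation that after $d$ reverse steps the positions have been fully cycled through all $d$ bit-slots, so such a step always exists. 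The remaining ingredients --- convexity, projection, and the entropy/$d$-distance comparison --- are exactly Propositions~\ref{conv}, Lemmas~\ref{projection} and~\ref{dent} and should slot in routinely once the pairwise contraction at the critical step is established.
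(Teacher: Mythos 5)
You have assembled the right toolkit (the chain-rule decomposition, the projection lemma, convexity of $d$, and Lemma~\ref{dent} to convert a $d$-distance gain into an entropy gain at the price of a $1/d$ factor), and you have correctly located the hard part in your step (2). But the proposal stops exactly where the proof has to start: the two constructions that actually carry the argument are absent, and one of your intermediate claims is false as stated. The mechanism for extracting a fresh coin is not to ``charge'' a bit-refreshing step to each position of a single process; it is to build a second, coupled copy $\xt$ of the shuffle by flipping, for each index $j$, the single coin $Z(L(X_{T_j}(j)),T_j)$ at a time $T_j=\lfloor\log_2 j\rfloor+1-T$ chosen early enough that this coin is provably \emph{not} among the coins revealed by conditioning on the trajectories of all indices $i>j$ (this is where the specific form of $T_j$ and the condition $m(j)<j$ enter). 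One then telescopes $\ent(X_d\circ\mu\given\f_{j+1})-\ent(X_d\circ\mu\given\f_j)$ over $j$, with $\f_j$ generated by the trajectories of \emph{both} $X$ and $\xt$, and uses that, given the unordered pair of trajectories of index $j$, the ordered pair is an exact even mixture. Your write-up never introduces a second process, so there is nothing for the ``averaging measured by $d(\cdot,\cdot)$'' to average.

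The more serious gap is quantitative: averaging index $j$ against a \emph{single} sibling yields only $d\bigl(\law(\mu^{-1}(j)\given\g'),\law(\mu^{-1}(k)\given\g')\bigr)$ for one particular $k$, and this can equal $0$ even when $\ent(\mu,j)$ is large (take $\mu$ symmetric in $j$ and $k$). Lemma~\ref{dent} requires the second argument of $d$ to be the \emph{uniform} distribution, so your step (4) has no valid input. The paper manufactures uniformity by taking $T$ to be a geometric$(1/2)$ random variable: the induced match $m(j)$ then lands on each $k<j$ with equal probability $2^{-(r+1)}$, and convexity of $d(p,\cdot)$ (Proposition~\ref{conv}) converts the resulting average of pairwise distances into $d\bigl(\law(\mu^{-1}(j)\given\g'),\u\bigr)$, to which Lemma~\ref{dent} applies and produces the $1/d$. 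Without the coupled process and without this randomization of the flip time, the contraction $(1-c/d)$ does not follow from what you have written.
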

Before proving this lemma we show how it gives the desired mixing
time bound.
\begin{theorem}
The mixing time of the reverse Thorp shuffle with $2^d$ cards
is $O(d^3)$.
\end{theorem}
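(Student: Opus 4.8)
The plan is to derive the $O(d^3)$ mixing time bound from Lemma~\ref{mainlemma} by a routine iteration-and-bootstrapping argument, using the entropy/total-variation inequalities recalled in Section~\ref{bk}. First I would observe that a single application of the lemma, combined with the fact that the RT shuffle is a random walk on a group, yields a contraction of the relative entropy by a factor $(1 - c/d)$ every $d$ steps. Concretely, writing $\pi_t$ for the law of $X_t$ started from the identity, and using that $X_{t+d}$ has the distribution of $X_d \circ X_t$ where the two factors are independent copies, Lemma~\ref{mainlemma} gives $\ent(\pi_{t+d}) \le (1 - c/d)\,\ent(\pi_t)$. Iterating, after $m$ blocks of $d$ steps we have $\ent(\pi_{md}) \le (1 - c/d)^m \ent(\pi_0)$.

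Next I would bound the initial entropy: $\ent(\pi_0) = \ent(\unif$ on $\S_n) \le \log(n!) = O(n \log n)$, which for $n = 2^d$ is $O(d\, 2^d)$ — this is exponentially large in $d$, so a naive count of blocks would give mixing time $O(d^2 \cdot d \cdot 2^d/\log(\cdots))$, far too weak. The fix, standard in this literature, is to run the chain for a burn-in phase that drives the entropy down to something polynomial in $d$, and only then invoke the geometric contraction. But here there is a cleaner route: the contraction $\ent(\pi_{t+d}) \le (1 - c/d)\ent(\pi_t)$ already holds for \emph{every} $t \ge 0$, so after $m$ blocks we get $\ent(\pi_{md}) \le e^{-cm/d}\ent(\pi_0) \le e^{-cm/d} \cdot O(d\,2^d)$. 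To make this at most, say, $1/2$ we need $cm/d \ge d\log 2 + O(\log d)$, i.e. $m = O(d^2)$, hence $md = O(d^3)$ steps. Finally, to convert an entropy bound into the total-variation mixing time in the definition~(\ref{mixingtime}), I would use inequality~(\ref{totent}): once $\ent(\pi_t) \le 1/8$ we have $\|\pi_t - \unif\| \le \sqrt{\half \cdot \eighth} = \fourth$, which is exactly the threshold in the definition of $T_{\rm mix}$. Choosing the constant in $m = O(d^2)$ appropriately to reach $\ent \le 1/8$ rather than $1/2$ completes the argument, giving $T_{\rm mix} = O(d^3)$.

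There is one point requiring a little care: Lemma~\ref{mainlemma} is stated for $X_d \circ \mu$ with $\mu$ \emph{independent} of $\{X_t\}$, so to iterate I must check that at each stage the "past" block and the "fresh" block of $d$ RT-shuffle steps are genuinely independent. This is automatic from the construction via the i.i.d.\ Bernoulli array $Z$: the fresh block uses a disjoint time-slice of the $Z(l,t)$'s, hence is independent of everything before, and plays the role of $X_d$, while the accumulated permutation plays the role of $\mu$. So the iteration is legitimate. I do not expect any real obstacle here — the substance of the paper is entirely in Lemma~\ref{mainlemma}; the theorem is a short deduction. If I were to name the trickiest bookkeeping step, it is just tracking that $\ent(\pi_0) = O(d 2^d)$ forces $\Theta(d^2)$ blocks (not $O(d)$), which is where the cube rather than a square in the exponent comes from; everything else is a direct appeal to Propositions and Lemmas already in Section~\ref{bk}.
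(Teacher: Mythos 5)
Your proposal is correct and follows essentially the same route as the paper: iterate Lemma~\ref{mainlemma} over $O(d^2)$ blocks of $d$ steps (the independence of the fresh block being immediate from the i.i.d.\ array $Z$, as you note) to drive the relative entropy from $O(d\,2^d)$ down to $1/8$, then convert to total variation via~(\ref{totent}). The only blemish is the phrase ``$\ent(\pi_0)=\ent(\unif$ on $\S_n)$'': the initial law is the point mass at the identity, whose relative entropy is $\log(n!)$ (the uniform distribution has relative entropy $0$), but the bound you actually use, $\ent(\pi_0)\le\log(n!)=O(d\,2^d)$, is the correct one and matches the paper's $\ent(\id)\le d\,2^d$.
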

\begin{proof}
Repeated applications of Lemma \ref{mainlemma}
give 
\begin{eqnarray*}
\ent(X_{kd}) &\leq& (1 - c/d)^k 
\ent(\id) \\
&\leq& e^{-ck/d} d 2^d.
\end{eqnarray*}
Now let $\alpha$ be large enough so that 
$\left( {2 \over e^\alpha} \right)^d \leq 1/8$ for all $d$. Then
if $k = \lceil \alpha d^2 /c \rceil$ we have 
\begin{eqnarray*}
\ent(X_{kd}) \leq e^{-ck/d} d 2^d \leq \eighth
\end{eqnarray*}
and hence $|| X_{kd} - \u || \leq \quarter$
by equation \ref{totent}. The theorem 
follows since $k$ is $O(d^2)$. 
\end{proof}
We now give the proof 
of lemma \ref{mainlemma}.
\begin{proofof}{Proof of Lemma \ref{mainlemma}}
Fix an integer $T \geq 1$. For integers $j < n$, 
define $T_j = \lfloor \log_2 j \rfloor + 1 - T$.
Note that $T_0 \leq T_1 \leq \cdots T_{n-1}$. 
Let $\zt$ be obtained from $Z$ by flipping the value 
of $Z(L(X_{T_j}), T_j)$ for all $j$. More precisely, 
define
\[
\zt(l, t) =
\left\{\begin{array}{ll}
1 - Z(l, t) &
\mbox{if for some $j$ we have $L(X_t(j)) = l$ and $T_j = t$;} \\
Z(l, t) & \mbox{otherwise.}\\
\end{array}
\right.
\]
Let $\{\xt_t: t \geq 0\}$ be the reverse Thorp shuffle process 
defined by using $\zt$ instead of $Z$. 
For $j$ with $0 \leq j < n$, 
define
$\Gamma_j(X) = (X_1(j), \dots X_d(j))$, 
with a similar definition for $\Gamma_j(\xt)$.
For $k$ with $0 \leq k \leq n$, define
\begin{eqnarray*}
\f_k &=& \sigma( \Gamma_j(X), \Gamma_j(\xt): j \geq k) \\
&=& \sigma( X_t(j), \xt_t(j): j \geq k, 0 \leq t\leq d)
\end{eqnarray*}
Since $\f_n$ is trivial and $X_d$ is $\f_0$-measurable, 
we have
\begin{eqnarray}
\ent(X_d \circ \mu) - \ent(\mu) &=& \ent(X_d \circ \mu \given
\f_n) - \ent(X_d \circ \mu \given \f_0) \\
&=& 
\label{colsum}
\sum_{j = 0}^{n-1} 
\ent(X_d \circ \mu \given \f_{j+1}) - 
\ent(X_d \circ \mu \given \f_{j})
\end{eqnarray} 
We claim that for all $j$ with $0 \leq j < n$
we have
\begin{equation}
\label{mainclaim}
\e\Bigl( \ent(X_d \circ \mu \given \f_{j+1}) - 
   \ent(X_d \circ \mu \given \f_{j}) \Bigr) 
\leq -\ent(\mu, j) {c \over d},
\end{equation}
where $c > 0$ is a universal constant. 
Note that combining this with equation (\ref{colsum})
gives 
\begin{eqnarray}
\ent(X_d \circ \mu) - \ent(\mu) &\leq& 
{c \over d}
\sum_{j = 0}^{n-1} 
\leq \ent(\mu, j) 
= {c \over d} \ent(\mu),
\end{eqnarray} 
which proves the lemma. 
It remains to verify equation (\ref{mainclaim}). 

For $j$ with $0 \leq j \leq n$, define 
$\fh_j = \sigma( \f_{k+1}, \{ \Gamma_j(X), \Gamma_j(\xt) \}).$ 
Note that this is  the sigma field generated by $\f_{k+1}$ 
and the {\it unordered set} 
$\{ \Gamma_j(X), \Gamma_j(\xt) \}.$
Note that $\fh_j \supset \f_{j+1}$. Hence
for all $j$ with $0 \leq j < n$ we have 
\begin{equation}
\label{ineq}
\e(\ent(X_d \circ \mu \given \f_{j+1}))
\leq 
\e(\ent(X_d \circ \mu \given \fh_{j})),
\end{equation}
by Jensen's inquality applied to $x \to x \log x$.
Let $W = \{X_d(j+1), \dots, X_d(n-1)\}$
and let $\g$ denote the sigma-field generated by 
$(X_d \circ \mu)^{-1}(x)$ for $x \in W$. 
Let $\g' = \sigma( 
\mu^{-1}(j+1), \dots, \mu^{-1}(n-1))$.
Then
\begin{eqnarray}
\label{constuff}
\ent(X_d \circ \mu \given \fh_{j})
-
\ent(X_d \circ \mu \given \f_{j}) 
&=&
\e \Bigl(
\ent(X_d \circ \mu \given \fh_{j}, \g) \\
& & \qquad
-
\ent(X_d \circ \mu \given \f_{j}, \g) \Bigr) \\
&=&
\e \Bigl(
\ent(X_d \circ \mu \given \fh_{j}, \g') \\
& &\qquad
-
\ent(X_d \circ \mu \given \f_{j}, \g') \Bigr), 
\end{eqnarray}
where the first equality holds by 
Proposition \ref{comps}.
Note that $\f_{j+1} = \sigma(S, Z(l,t): (l, t) \in S),$
where 
\[
S = \{ (r,t): \mbox{
$L(X_t(i)) = l$ or 
$L(\xt_t(i)) = l$ for some $i > j$}\},
\] 
that is, $S$ is the collection of bits used to generate 
$\Gamma_i(X)$ and $\Gamma_i(\xt)$ for $i > j$.

We shall refer to indices $i$ with $0 \leq i < n$ as
{\it cards.}
Say that cards $i$ and $j$ are {\it adjacent at time $t$}
if $L(X_t(i)) = L(X_t(j))$. If $T_j \geq 0$, let
$m(j)$ be the card adjacent to $j$ at time $T_j$. 

 Note that $\f_j = \sigma(\fh_{j+1}, Z( L(X_{T_j}(j)), T_j))$. 
Therefore, on the event that 
$(L(X_{T_j}(j)), T_j) \in S$
the expression on the lefthand-side of (\ref{constuff})
is $0$. However, we now show that if $m(j) < j$, then 
$(L(X_{T_j}(j)), T_j) \notin S$. 

Note that if $(l ,t) \in S$, then either $L(X_t(i)) = l$ 
for some $i > j$, or $t > T_i$ for some $i > j$
(and hence $t > T_j$). Thus if $m(j) < j$, then
$(L(X_{T_j}(j)), T_j) \notin S$. So on the event that 
$m(j) < j$ and 
$\{ \Gamma_j(X), \Gamma_j(\xt) \} = \{\Gamma, \Gamma'\},$
the conditional distribution of 
$(\Gamma_j(X), \Gamma_j(\xt))$
given
$\fh_{j}$ 
is an even mixture of 
$(\Gamma, \Gamma')$ and 
$(\Gamma', \Gamma)$,
according to the value of $Z(L(X_{T_j}(j), T_j)$.

Let $\law(W \given \f)$ denote the conditional 
distribution of random variable (or random
permutation)  $W$ given the sigma field $\f$.
Note that
\begin{eqnarray*}
\law(X_d \circ \mu \given \fh_{j}, \g) &=&
\half \law(X_d \circ \mu \given \f_j, \g) +
\half \law(\xt \circ \mu \given \f_j, \g). 
\end{eqnarray*}
Therefore, 
\begin{eqnarray*}
& & \ent(X_d \circ \mu \given \fh_{j+1}, \g) \\
&\leq&
\half \ent(X_d \circ \mu \given \f_j, \g) +
\half \ent(\xt \circ \mu \given \f_j, \g) - 
d(\law(X_d \circ \mu \given \f_j),\law(\xt \circ \mu \given \f_j, \g)) \\
&=&  \ent(X_d \circ \mu \given \f_j, \g) - d( 
\law(X_d \circ \mu \given \f_j, \g),
\law(\xt \circ \mu \given \f_j, \g)).
\end{eqnarray*}
But by the projection lemma,
\begin{eqnarray*}
d(
\law(X_d \circ \mu \given \f_j, \g), 
\law(\xt_d \circ \mu \given \f_j, \g))
&\geq&  
d(
\law( (X_d \circ \mu)^{-1}(X_d(j)) \given \f_j, \g), 
\law( (\xt_d \circ \mu)^{-1}(X_d(j)) \given \f_j, \g)) \\
&=&  
d(
\law( \mu^{-1}(j) \given \f_j, \g'), 
\law( \mu^{-1}(m(j)) \given \f_j, \g')). 
\end{eqnarray*}
Since $\mu$ is independent of $X_d$, this last quantity 
is $d( 
\law( \mu^{-1}(j) \given \g' ), 
\law( \mu^{-1}(m(j) \given \g'    )$. Combining this with equation 
(\ref{constuff}) gives
\begin{equation}
\label{finn}
\e \Bigl( \ent(X_d \circ \mu \given \fh_{j})
-
\ent(X_d \circ \mu \given \f_{j})  \Bigr)
\leq
-\e \Bigl(
d( 
\law( \mu^{-1}(j) \given \g' ), 
\law( \mu^{-1}(m(j)) \given \g'    ) \Bigr).
\end{equation}
Let $j_{d-1}j_{d-2}\cdots j_0$ be the binary representation of $j$.
For cards $k$ and $j$, write $D(k,j) = \max\{i: k_i \neq j_i\}$. 
Note that $D(k, j)$ is the minimum value of $t$ such that 
there is positive probability that
$k$ and $j$ are adjacent after $t$ steps. 
For $t \geq 0$, let $B(j, t) = \{k: D(k,j) = t\}$.
For convenience, let $B(j, t) = \emptyset$ if $t < 0$. 
Let $I = \{0, 1, \dots, j-1\}$.
Note that if $k \in B(j, T_j) \cap I$, then
$\P( m(j) = k) = \left({1 \over 2}\right)^{T_j}$. Equation (\ref{finn})
implies that
\begin{eqnarray*}
& & \e \Bigl( \ent(X_d \circ \mu \given \fh_{j})
-
\ent(X_d \circ \mu \given \f_{j})  \Bigr) \\
&\leq&
-\sum_{k \leq j} \P(m(j) = k)
\e\Bigl( d( 
\law( \mu^{-1}(j) \given \g' ), 
\law( \mu^{-1}(m(j)) \given \g')  \Bigr)   \\
&=& -\sum_{k \in B(j, T_j) \cap I}  
\left({1 \over 2}\right)^{T_j}
\e\Bigl( d( 
\law( \mu^{-1}(j) \given \g' ), 
\law( \mu^{-1}(m(j)) \given \g')  \Bigr) \\
&=& - \sum_{k \in B(j, T_j) \cap I}  
\left({1 \over 2}\right)^{r + 1 - T}
\e \Bigl( d( 
\law( \mu^{-1}(j) \given \g' ), 
\law( \mu^{-1}(m(j)) \given \g') \Bigr),
\end{eqnarray*}
where $r = \lceil \log_2 j \rceil$. It follows that if $T$ is 
a {\it random variable}  and $T_j = r + 1 - T$, then 
\begin{eqnarray*}
& & \e \Bigl( \ent(X_d \circ \mu \given \fh_{j})
-
\ent(X_d \circ \mu \given \f_{j})  \Bigr) \\
&\leq&
- \e \Bigl( \sum_{k \in B(j, T_j) \cap I}  
\left({1 \over 2}\right)^{r + 1 - T}
d( 
\law( \mu^{-1}(j) \given \g' ), 
\law( \mu^{-1}(m(j) \given \g') \Bigr). 
\end{eqnarray*}
In particular, if $T$ is geometric($1/2$), we have 
\begin{eqnarray*}
& & \e \Bigl( \ent(X_d \circ \mu \given \fh_{j+1})
-
\ent(X_d \circ \mu \given \f_{j})  \Bigr) \\
&\leq&
-\sum_{t=1}^\infty \left( {1 \over 2} \right)^t 
\left( { 1 \over 2} \right)^{r + 1 - t}
\sum_{k \in B(j, r + 1 - t) \cap I}  
d\Bigl( 
\law( \mu^{-1}(j) \given \g' ), 
\law( \mu^{-1}(m(j) \given \g') \Bigr) \\ 
&=&
-\left( { 1 \over 2} \right)^{r+1}
\sum_{k \leq j}  
d\Bigl( 
\law( \mu^{-1}(j) \given \g' ), 
\law( \mu^{-1}(m(j) \given \g') \Bigr). 
\end{eqnarray*}
Since $j \geq 2^{r-1}$, this is at most
\begin{eqnarray*}
& & 
-{1 \over 4} \Bigl(\, {1 \over j}
\sum_{k \leq j}  
d( 
\law( \mu^{-1}(j) \given \g' ), 
\law( \mu^{-1}(m(j) \given \g') \Bigr) \\
&\leq&
-{1 \over 4} \Bigl(\, 
d( 
\law( \mu^{-1}(j) \given \g' ), 
{1 \over j}
\sum_{k \in I}  
\law( \mu^{-1}(m(j) \given \g') \Bigr) \\
&\leq&
-c \ent(\mu, j), 
\end{eqnarray*}
for a universal constant $c$, 
where the first inequality follows from 
Proposition \ref{conv} and the second inequality
follows from Proposition \ref{dent}
(since the second argument of $d$ is the uniform distribution). 
Combining this with equation (\ref{ineq}) verifies 
equation (\ref{mainclaim}), which completes the 
proof.
\end{proofof}
The above analyis extends to the non 
power-of-two case and we intend to handle this in the final 
version of this paper. 

\end{document}